\documentclass[11pt]{article}

\usepackage{amsmath,amssymb,amsfonts,amsthm}
\usepackage{graphicx}
\usepackage{subcaption}
\usepackage{multirow}
\usepackage{bbm}
\usepackage[authoryear,round]{natbib}
\usepackage[colorlinks,citecolor=blue,urlcolor=blue]{hyperref}
\usepackage[margin=1in]{geometry}

\newcommand{\bfw}{\mbox{\bf w}}

\newcommand{\bfx}{\mbox{\bf x}}
\newcommand{\bfy}{\mbox{\bf y}}

\newcommand{\bbF}{\mbox{$\mathbb{F}$}}

\newcommand\raiseT[2]{%
\setbox0\hbox{$#1{#2}$}\raise\dp0\box0}

\newcommand{\bea}{\begin{eqnarray}}
\newcommand{\eea}{\end{eqnarray}}
\newcommand{\ba}{\begin{eqnarray*}}
\newcommand{\ea}{\end{eqnarray*}}
\newcommand{\be}{\begin{equation}}
\newcommand{\ee}{\end{equation}}
\newcommand{\bi}{\begin{itemize}}
\newcommand{\ei}{\end{itemize}}

\newcommand{\var}{\mbox{\sc var}}

\newcommand{\ind}{\mathbbm{1}}

\theoremstyle{plain}

\newtheorem{theorem}{Theorem}[section]

\theoremstyle{definition}

\title{\textbf{Low-Order Inclusion Probabilities\\
Do Not Determine Asymptotic Normality}}

\author{
Jiahua Chen\\[2mm]
Department of Statistics\\
The University of British Columbia\\
Vancouver, BC, Canada\\
\texttt{jhchen@stat.ubc.ca}
}

\date{}

\begin{document}

\maketitle

\begin{abstract}
Central limit theorems in survey sampling are typically established
under specific sampling designs rather than from generic characteristics
of a design. Motivated by the search for more general conditions,
we investigate whether low-order inclusion probabilities can provide
such a basis. First- and second-order inclusion probabilities determine
the first two moments of sampling estimators such as the
Horvitz--Thompson estimator, but we show that even exact agreement of
these probabilities with those of simple random sampling without
replacement (SRSWOR) does not preserve asymptotic normality.
We construct a sequence of fixed-size sampling
designs having exactly the same first- and second-order inclusion
probabilities as SRSWOR, together with a sequence of finite populations
for which the sample total converges to $N(0,1)$ under SRSWOR but to a
centered standard exponential distribution under the alternative
designs. Thus, the two sample totals have exactly the same mean and
variance at every stage of the sequence, yet have different limiting
distributions. The alternative designs are obtained from balanced
incomplete block designs arising from finite projective planes. Their
cyclic structure allows us to prescribe the distribution of the sample
total under block sampling, while a probabilistic permutation argument,
combined with an exponential inequality for sampling without
replacement, yields finite populations satisfying the SRSWOR central
limit theorem. The result shows that low-order inclusion probabilities
do not control the global dependence structure relevant to asymptotic
normality and therefore cannot, by themselves, provide general
conditions for central limit theorems under complex sampling designs.
\end{abstract}

\medskip
\noindent
\textbf{MSC2020 subject classifications.}
Primary 62D05, 60F05; secondary 05B05.

\medskip
\noindent
\textbf{Keywords and phrases.}
Balanced incomplete block design; finite-population asymptotics;
inclusion probability; sampling design; simple random sampling without
replacement; survey sampling.

\bigskip

\section{Introduction}

The central limit theorem (CLT) is one of the fundamental results in probability
and statistics. Beyond its classical formulation for sums of independent
random variables, it represents a much broader phenomenon: the aggregate
effect of a large number of individually non-dominating contributions often
behaves approximately normally. Together with the delta method, Slutsky's
theorem, and related techniques, central limit theorems provide the theoretical
basis for a large part of frequentist statistical inference; see, for example,
\cite{serfling1980}.

Survey sampling provides a particularly interesting setting for studying this
phenomenon. Let
\[
{\cal P}_N=\{y_1,\ldots,y_N\}
\]
be a finite population, and let a probability sampling plan be a probability
distribution on the subsets of $\{1,\ldots,N\}$. Under simple random sampling
without replacement (SRSWOR) of size $n$, every subset of size $n$ is selected
with probability ${N\choose n}^{-1}$. The sample mean
\[
\bar y_n=\frac{1}{n}\sum_{i\in s}y_i
\]
is unbiased for the population mean
\[
\bar Y_N=\frac{1}{N}\sum_{i=1}^N y_i,
\]
with
\[
\var(\bar y_n)
=
\left(\frac1n-\frac1N\right)V_N^2,
\qquad
V_N^2=\frac{1}{N-1}\sum_{i=1}^N(y_i-\bar Y_N)^2.
\]
For a sequence of finite populations and sample sizes tending to infinity,
suitable conditions on the population values imply a finite-population CLT
for $\bar y_n$; classical results include \cite{erdos1959} and
\cite{hajek1960}.

More general sampling plans are often required in applications. Let
$\delta_i$ indicate whether unit $i$ is included in the sample, and define
the first- and second-order inclusion probabilities by
\[
\pi_i=P(\delta_i=1),
\qquad
\pi_{ij}=P(\delta_i=\delta_j=1).
\]
The population total
\[
Y_N=\sum_{i=1}^N y_i
\]
is commonly estimated by the Horv\'ath--Thompson (HT) estimator
\[
\widehat Y_{\rm HT}
=
\sum_{i=1}^N\frac{\delta_i y_i}{\pi_i}.
\]
The expectation of the HT estimator is determined by the first-order
inclusion probabilities, whereas its variance is determined entirely by the
first- and second-order inclusion probabilities. Consequently, two sampling
plans having identical $\pi_i$ and $\pi_{ij}$ produce HT estimators having
exactly the same expectation and variance for every finite population.

In contrast to these simple moment calculations, establishing asymptotic
normality under general sampling plans is considerably more difficult.
Central limit theorems in survey sampling have historically been developed
largely one sampling design at a time. Classical examples include SRSWOR
\citep{erdos1959,hajek1960} and rejective sampling
\citep{Hajek1964}. More recent results likewise typically exploit particular
structures of the sampling design. For example, \citet{berger1998normal}
obtains normal approximations by relating unequal-probability designs to
rejective sampling, while \citet{loonis2019determinantal} exploit the
dependence structure of determinantal sampling designs. Central limit
theorems have also been established for particular multistage settings,
including two-phase sampling \citep{chenrao2007} and two-stage sampling
\citep{chauvet2020}. In methodological developments for more general complex
designs, asymptotic normality of an appropriate HT estimator may instead be
imposed as an additional regularity condition; see, for example,
\citet{boistard2017functional} and \citet{berger2016empirical}.

This design-by-design nature of the existing theory naturally raises the
question of whether a more generic route to asymptotic normality is possible.
Ideally, one would like to identify transparent and verifiable characteristics
shared by broad classes of sampling designs that are sufficient for a CLT.
Inclusion probabilities are natural candidates. They are intrinsic
characteristics of a sampling design, can be specified without describing its
complete probability distribution, and already determine the expectation and
variance of the HT estimator through their first two orders. It is therefore
natural to ask whether sufficiently strong conditions on low-order inclusion
probabilities might also control its asymptotic distribution.

This leads to the basic question considered in this paper:
{\bf
How much do low-order inclusion probabilities tell us about the
sampling distribution of an estimator?
}
More specifically, suppose two sequences of sampling plans have identical
first- and second-order inclusion probabilities. If the HT estimator is
asymptotically normal under one design, must it also be asymptotically normal
under the other?

{\bf The answer is no.}

We construct a sequence of fixed-size sampling plans having exactly the same
first- and second-order inclusion probabilities as SRSWOR, together with a
sequence of finite populations for which the sample total is asymptotically
normal under SRSWOR but converges to a centered exponential distribution
under the alternative sampling plans. Thus, the two estimators have exactly
the same expectation and variance at every stage of the sequence, while their
limiting distributions are fundamentally different. In particular, even
exact agreement with SRSWOR in first- and second-order inclusion probabilities
cannot transfer its central limit theorem to another sampling design.

Constructing such an example is not immediate. The alternative sampling plans
are obtained from a particular sequence of balanced incomplete block designs
(BIBDs) arising from finite projective planes. Their combinatorial structure
ensures exact agreement with SRSWOR in first- and second-order inclusion
probabilities. A cyclic representation of these designs then allows us to
prescribe the distribution of the sample total under BIBD sampling. The
remaining difficulty is to construct a finite population having these
prescribed BIBD sample totals while simultaneously satisfying a
finite-population CLT under SRSWOR. We resolve this problem through a
permutation argument: after randomly assigning the prescribed block totals,
an exponential inequality for sampling without replacement shows that there
exists an assignment for which no population unit dominates. The resulting
finite population therefore satisfies a sufficient condition for the SRSWOR
CLT while retaining the prescribed nonnormal distribution under BIBD sampling.

The result demonstrates a fundamental distinction between local and global
information about a sampling design. Low-order inclusion probabilities
determine corresponding low-order moments of the Horv\'ath--Thompson
estimator, but they need not control the global dependence structure of the
sample indicators. It is this dependence structure that can remain visible
in the limiting distribution even when the first two moments agree exactly.
Consequently, general conditions for asymptotic normality cannot be based on
first- and second-order inclusion probabilities alone. Additional information
about the sampling design is essential.

The example naturally raises a broader question. For a fixed integer $r$,
suppose two sequences of sampling plans agree in all inclusion probabilities
through order $r$. Does such finite-order agreement impose sufficiently strong
restrictions to determine their limiting distributions? We conjecture that it
does not: for every fixed $r$, agreement of inclusion probabilities through
order $r$ alone is insufficient to guarantee a common limiting distribution.
The present paper establishes this assertion only for $r=2$.

The remainder of the paper is organized as follows.
Section~\ref{sec:connection} establishes the connection between block designs and SRSWOR,
showing more generally that a block design of strength $t$ reproduces
the SRSWOR inclusion probabilities through order $t$.
Section~\ref{sec:BIBD} constructs the particular projective-plane BIBDs used in our
counterexample and develops their cyclic representation.
Section~\ref{sec:construction} constructs the finite populations and establishes the
different limiting distributions under BIBD sampling and SRSWOR.
Section~\ref{sec:numerical} provides a numerical demonstration, 
and Section~\ref{sec:discussion} concludes with a brief discussion.

\section{Connecting block designs and SRSWOR}
\label{sec:connection}
Given a finite population of size $N$ and a sample size $n$, simple
random sampling without replacement (SRSWOR) assigns equal selection
probability to all subsets $s$ of size $n$ from
\[
[N]=\{1,2,\ldots,N\}.
\]
Its first- and second-order inclusion probabilities are
\[
\pi_i=\frac{n}{N},
\qquad
\pi_{ij}=\frac{n(n-1)}{N(N-1)},
\]
for all $i\in[N]$ and all distinct $i,j\in[N]$.

We first show that these inclusion probabilities can be reproduced
exactly by sampling from a balanced incomplete block design (BIBD),
even though the resulting sampling plan can be very different from
SRSWOR. More generally, a block design of strength $t$ reproduces all
SRSWOR inclusion probabilities through order $t$. This observation
provides both the starting point for our counterexample and the
motivation for the broader question concerning finite-order inclusion
probabilities raised in the Introduction.

Our construction uses balanced incomplete block designs from
experimental design \citep{beth1999design}. In experimental design,
one may wish to compare the effects of $N$ treatments, while only a
subset of $n<N$ treatments can be applied within each block. A BIBD
consists of $B$ blocks of size $n$ arranged so that every treatment
appears in the same number $R$ of blocks and every pair of distinct
treatments appears together in the same number $\lambda$ of blocks.

Regard the $N$ treatments as the $N$ units of a finite population and
the $B$ blocks as subsets of this population. A sampling plan is
obtained by selecting one of the $B$ blocks uniformly at random and
taking all units in the selected block as the sample. Its first- and
second-order inclusion probabilities are
\[
\pi_i=\frac{R}{B},
\qquad
\pi_{ij}=\frac{\lambda}{B},\quad i\ne j.
\]
The standard BIBD identities
\[
NR=Bn,
\qquad
R(n-1)=\lambda(N-1)
\]
then give
\[
\pi_i=\frac{n}{N},
\qquad
\pi_{ij}=\frac{n(n-1)}{N(N-1)}.
\]
Thus, every BIBD induces a fixed-size sampling plan having exactly the
same first- and second-order inclusion probabilities as SRSWOR of the
same size.
Since both designs have the same constant first-order inclusion
probability $\pi_i=n/N$, equality of the first two moments of the
HT estimators is equivalent to equality of the first two moments
of the corresponding sample totals.

The connection extends naturally to higher-order inclusion
probabilities. Recall that a $t$-$(N,n,\lambda_t)$ design is a
collection of blocks of size $n$ from $N$ treatments such that every
set of $t$ distinct treatments is contained in exactly $\lambda_t$
blocks.

\begin{theorem}
\label{thm:tdesign}
Let ${\cal B}=\{B_1,\ldots,B_B\}$ be a
$t$-$(N,n,\lambda_t)$ design. Consider the sampling plan that selects
one block uniformly from ${\cal B}$ and takes all units in the selected
block as the sample. Then, for every $s=1,\ldots,t$ and every set of
$s$ distinct units $i_1,\ldots,i_s$,
\[
\pi_{i_1\cdots i_s}
=
P(i_1,\ldots,i_s\in S)
=
\frac{n^{(s)}}{N^{(s)}}
=
\frac{\binom ns}{\binom Ns},
\]
where
\[
a^{(s)}=a(a-1)\cdots(a-s+1)
\]
denotes the falling factorial. Consequently, all inclusion
probabilities through order $t$ are identical to those under SRSWOR
of size $n$ from a population of size $N$.
\end{theorem}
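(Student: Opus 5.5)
The plan is to reduce everything to a double-counting argument. The first step shows that a $t$-design is automatically an $s$-design for every $s\le t$. The second step reads off the inclusion probabilities as block counts divided by $B$.

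Fix $s\in\{1,\ldots,t\}$ and a set $S_0=\{i_1,\ldots,i_s\}$ of distinct units. Let $\lambda_s(S_0)$ denote the number of blocks containing $S_0$. We count pairs $(T,B_j)$ in which $T$ is a $t$-subset with $S_0\subseteq T\subseteq B_j$, and we count them in two ways.

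\emph{Counting by $T$.} There are $\binom{N-s}{t-s}$ choices of $T\supseteq S_0$. Each such $T$ lies in exactly $\lambda_t$ blocks, so the number of pairs is $\lambda_t\binom{N-s}{t-s}$.

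\emph{Counting by $B_j$.} Each block containing $S_0$ contains exactly $\binom{n-s}{t-s}$ such $T$. The number of pairs is therefore $\lambda_s(S_0)\binom{n-s}{t-s}$.

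Equating the two counts gives
\[
\lambda_s(S_0)=\lambda_t\binom{N-s}{t-s}\Big/\binom{n-s}{t-s}.
\]
This does not depend on $S_0$, so we may write it as $\lambda_s$. The case $s=0$ gives the number of blocks, $B=\lambda_t\binom{N}{t}/\binom{n}{t}$.

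One small point needs care. We must have $n\ge t$ so that $\binom{n-s}{t-s}>0$. This holds whenever $\lambda_t>0$, which I will assume as part of the definition of a nontrivial design. If $\lambda_t=0$ there are no blocks and the sampling plan is undefined.

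Since the plan selects a block uniformly at random,
\[
\pi_{i_1\cdots i_s}=\frac{\lambda_s}{B}
=\frac{\binom{N-s}{t-s}\binom{n}{t}}{\binom{n-s}{t-s}\binom{N}{t}}.
\]
The remaining step is to simplify this ratio. Use the identity $\binom{a}{t}=\binom{a}{s}\binom{a-s}{t-s}\big/\binom{t}{s}$ for $a=n$ and $a=N$. The factors $\binom{a-s}{t-s}$ and $\binom ts$ cancel, leaving $\binom ns/\binom Ns$. Multiplying numerator and denominator by $s!$ gives $n^{(s)}/N^{(s)}$.

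Finally, under SRSWOR the probability that a fixed $s$-set is included is $\binom{N-s}{n-s}/\binom Nn$. By the same identity this equals $\binom ns/\binom Ns$, which matches the block-sampling value and proves the last sentence of the theorem.

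There is no real obstacle here. The only step that needs attention is the binomial simplification and the nondegeneracy condition $n\ge t$. For $t=2$ the argument reproduces the BIBD identities $NR=Bn$ and $R(n-1)=\lambda(N-1)$ quoted above.
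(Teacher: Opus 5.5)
Your proof is correct, but it is organized differently from the paper's. The paper simply asserts that the defining property of a $t$-design makes the number $\lambda_s$ of blocks containing a given $s$-set independent of that set. This is the standard fact that a $t$-design is also an $s$-design. The paper then double counts pairs (an $s$-subset, a block containing it), which gives $B\binom{n}{s}=\lambda_s\binom{N}{s}$ in one line with no binomial identities.

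You instead prove the standard fact, by double counting pairs $(T,B_j)$ with $S_0\subseteq T\subseteq B_j$. This yields the explicit formula $\lambda_s=\lambda_t\binom{N-s}{t-s}/\binom{n-s}{t-s}$, and the formula for $B$ as the case $s=0$. The inclusion probability then appears as a ratio, which you simplify with $\binom{a}{t}\binom{t}{s}=\binom{a}{s}\binom{a-s}{t-s}$. Your version is self-contained, supplies exactly the step the paper takes for granted, and gives $\lambda_s$ explicitly; the paper's is shorter. The two combine naturally: once you have shown that $\lambda_s$ is constant, the paper's direct count finishes the proof without the binomial simplification.

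One small correction to your nondegeneracy remark. When $n<t$, $\lambda_t=0$ does not force an empty design. For example, $\{\{1\}\}$ is literally a $2$-$(3,1,0)$ design, and there $\pi_1=1\neq 1/3$, so the theorem fails. Also, $\lambda_t>0$ forces $n\ge t$ only when $N\ge t$. The clean fix is to take $t\le n\le N$ as part of the definition of a $t$-design. This is the standard convention, and the paper's proof tacitly relies on it too.
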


\begin{proof}
Fix $s\leq t$ and a set $C$ of $s$ distinct population units.
Let $\lambda_s$ denote the number of blocks containing $C$.
The defining property of a $t$-design implies that $\lambda_s$ does
not depend on the particular choice of $C$.

Count pairs $(C',B_j)$ for which $C'$ is an $s$-subset of the
population and $C'\subset B_j$. Counting first over the $B$ blocks
gives
\[
B\binom ns.
\]
On the other hand, there are $\binom Ns$ possible choices of $C'$,
each contained in exactly $\lambda_s$ blocks. Hence
\[
B\binom ns
=
\lambda_s\binom Ns,
\]
and therefore
\[
\frac{\lambda_s}{B}
=
\frac{\binom ns}{\binom Ns}.
\]
Since a block is selected uniformly,
\[
\pi_{i_1\cdots i_s}
=
\frac{\lambda_s}{B}
=
\frac{\binom ns}{\binom Ns},
\]
which is precisely the corresponding inclusion probability under
SRSWOR.
\end{proof}

Theorem~\ref{thm:tdesign} shows that agreement with SRSWOR in
finite-order inclusion probabilities is not an exceptional property:
it arises naturally from block designs of corresponding strength.
Our counterexample requires only the case $t=2$. What is exceptional,
and considerably more difficult to arrange, is that the resulting
BIBD sampling plan and SRSWOR have different limiting distributions
on the same sequence of finite populations.

We therefore turn to the two ingredients needed for the construction.
First, we identify a particular sequence of BIBDs with enough
algebraic structure to control all possible sample totals under block
sampling. Second, we construct a corresponding sequence of finite
populations for which the SRSWOR central limit theorem nevertheless
holds. For the first task we use BIBDs arising from finite projective
planes. Their construction requires some elementary facts about finite
fields and finite geometry, which we introduce in the next section.

\section{A projective-plane BIBD}
\label{sec:BIBD}

We now construct the particular sequence of BIBDs that will be used in
our counterexample. The construction is based on finite projective planes.
We introduce only the finite-field and finite-geometry facts needed for
our purpose; standard references include \citet{lidl1997finite},
\citet{dembowski1968finite}, and \citet{hughes1973projective}.

\subsection{Finite fields}

In abstract algebra, a field is a set equipped with two operations,
addition and multiplication, satisfying the usual field axioms familiar
from the real numbers $\mathbb{R}$. A finite field is a field containing
finitely many elements. A finite field with $q$ elements exists if and
only if
\[
q=p^m
\]
for some prime number $p$ and positive integer $m$. Up to isomorphism,
the finite field of order $q$ is unique and is denoted by $\bbF_q$.

When $q=p^m$, the finite field $\bbF_{p^m}$ is an $m$-dimensional
vector space over $\bbF_p$. Hence, as vector spaces,
\[
\bbF_{p^m}\cong\bbF_p^m
=
\{(a_0,a_1,\ldots,a_{m-1}):a_i\in\bbF_p\}.
\]
Here
\[
\bbF_p=\{0,1,\ldots,p-1\},
\]
with addition and multiplication performed modulo $p$.

Addition in $\bbF_{p^m}$ corresponds to componentwise addition in
$\bbF_p^m$. To define multiplication, identify each vector with a
polynomial of degree less than $m$:
\[
(a_0,a_1,\ldots,a_{m-1})
\quad\longleftrightarrow\quad
a_0+a_1x+\cdots+a_{m-1}x^{m-1}.
\]
There exists an irreducible polynomial $p^\ast(x)$ of degree $m$ over
$\bbF_p$, which may be chosen to be monic. If $p_1(x)$ and $p_2(x)$
are two polynomial representations, their product in $\bbF_{p^m}$ is
defined by
\[
p_1(x)\cdot p_2(x)
=
p_1(x)p_2(x)\pmod{p^\ast(x)}.
\]
Together with componentwise addition, this multiplication makes the
$p^m$ vectors into a finite field of size $p^m$. See
\citet{lidl1997finite} for further details.

For our construction, we will use the finite field $\bbF_{q^3}$.
It can be regarded as a three-dimensional vector space over $\bbF_q$.
After choosing an $\bbF_q$-basis, every element of $\bbF_{q^3}$ may
therefore be represented by a vector in $\bbF_q^3$.

\subsection{The projective plane $PG(2,q)$}

We introduce the projective geometry $PG(2,q)$ through its
linear-algebra representation.

First, divide the nonzero vectors in $\bbF_q^3$ into equivalence
classes. Two nonzero vectors $\bfx$ and $\bfy$ are equivalent if and
only if
\[
\bfx=c\bfy
\]
for some $c\in\bbF_q^\ast$, where $\bbF_q^\ast=\bbF_q\setminus\{0\}$.
Thus, each equivalence class consists of the nonzero vectors in a
one-dimensional subspace of $\bbF_q^3$. Such an equivalence class is
called a \emph{projective point}. We write
\[
P_{\bfx}
=
\{c\bfx:c\in\bbF_q^\ast\}.
\]
There are $q^3-1$ nonzero vectors in $\bbF_q^3$, and each projective
point contains $q-1$ of them. Therefore, the number of projective
points is
\[
N=\frac{q^3-1}{q-1}=q^2+q+1.
\]

Second, each nonzero vector
\[
\bfx=(x_0,x_1,x_2)\in\bbF_q^3
\]
defines a two-dimensional subspace
\[
L_{\bfx}
=
\{\bfy=(y_0,y_1,y_2)\in\bbF_q^3:
x_0y_0+x_1y_1+x_2y_2=0\}.
\]
Such a two-dimensional subspace represents a \emph{projective line}.
Since
\[
L_{c\bfx}=L_{\bfx},\qquad c\in\bbF_q^\ast,
\]
the projective lines, like the projective points, may be indexed by
equivalence classes of nonzero vectors. It follows that there are also
$N=q^2+q+1$ projective lines.

A two-dimensional subspace contains $q^2-1$ nonzero vectors, which
form
\[
\frac{q^2-1}{q-1}=q+1
\]
projective points. Hence every projective line contains exactly $q+1$
projective points.

Finally, consider two distinct projective points $P_{\bfx_1}$ and
$P_{\bfx_2}$. Since they are distinct, $\bfx_1$ and $\bfx_2$ are
linearly independent and therefore generate the unique two-dimensional
subspace
\[
\operatorname{span}(\bfx_1,\bfx_2)
=
\{a_1\bfx_1+a_2\bfx_2:a_1,a_2\in\bbF_q\}.
\]
Every two-dimensional subspace of $\bbF_q^3$ represents a projective
line. Consequently, every pair of distinct projective points lies on
exactly one projective line.

\subsection{The associated BIBD}
\label{subsec:BIBD}

We now form a BIBD from $PG(2,q)$. Regard the
\[
N=q^2+q+1
\]
projective points as the units of a finite population. Corresponding
to each projective line, define a block consisting of all projective
points lying on that line. Since there are $N$ projective lines and
each contains
\[
n=q+1
\]
projective points, this gives $N$ blocks, each of size $n=q+1$.

Every pair of distinct projective points lies on exactly one projective
line. Hence every pair of distinct units appears together in exactly
one block, so that
\[
\lambda=1.
\]
It remains to determine the number of blocks containing each individual
unit. Fix a projective point $P$. There are $N-1$ other projective
points, and each determines together with $P$ a unique projective line.
Each line through $P$ contains $q$ other points. Therefore, the number
of lines through $P$ is
\[
\frac{N-1}{q}
=
\frac{q^2+q}{q}
=
q+1.
\]
Thus every unit appears in exactly
\[
R=q+1
\]
blocks.

Consequently, $PG(2,q)$ gives a BIBD with parameters
\[
N=q^2+q+1,\qquad
B=N,\qquad
n=q+1,\qquad
R=q+1,\qquad
\lambda=1.
\]
By the result of the preceding section, selecting one of these $N$
blocks uniformly at random defines a fixed-size sampling plan having
exactly the same first- and second-order inclusion probabilities as
SRSWOR of size $n=q+1$ from a population of size $N=q^2+q+1$.

Taking $q=p^m$ for a fixed prime $p$ and letting $m\to\infty$ therefore
gives the required sequence of BIBD sampling plans, with both
$n\to\infty$ and $N\to\infty$.

\subsection{Cyclic representation}
\label{subsec:cyclic}

The cyclic representation of this BIBD plays an important role in both
our theoretical construction and its numerical implementation.
We develop this
representation while the finite-field construction is still at hand.

Recall that $\bbF_{q^3}$ is a three-dimensional vector space over
$\bbF_q$. Let $\bbF_{q^3}^\ast$ denote its multiplicative group of
nonzero elements. Since $\bbF_q^\ast$ is a subgroup of
$\bbF_{q^3}^\ast$, the quotient group
\[
\bbF_{q^3}^\ast/\bbF_q^\ast
\]
has order
\[
N=\frac{q^3-1}{q-1}=q^2+q+1.
\]
Its elements are the cosets
\[
x\bbF_q^\ast
=
\{cx:c\in\bbF_q^\ast\},
\qquad x\in\bbF_{q^3}^\ast.
\]
Under the identification of $\bbF_{q^3}$ with $\bbF_q^3$, these
cosets are precisely the projective points introduced above.

The multiplicative group $\bbF_{q^3}^\ast$ is cyclic
\citep{lidl1997finite}. Thus, there exists a primitive element
$\theta\in\bbF_{q^3}^\ast$ such that
\[
\bbF_{q^3}^\ast
=
\{1,\theta,\theta^2,\ldots,\theta^{q^3-2}\}.
\]
Consequently, the projective points may be ordered as
\[
\bbF_q^\ast,\quad
\theta\bbF_q^\ast,\quad
\theta^2\bbF_q^\ast,\quad
\ldots,\quad
\theta^{N-1}\bbF_q^\ast.
\]
We may therefore label them by
\[
\mathbb Z_N=\{0,1,\ldots,N-1\},
\]
where label $j$ corresponds to the projective point
$\theta^j\bbF_q^\ast$. Under this labeling, multiplication by
$\theta$ sends
\[
j\longmapsto j+1\pmod N.
\]
Thus, multiplication by $\theta$ becomes ordinary cyclic translation
of the point labels.

This cyclic action also acts on the projective lines. Multiplication
by $\theta$ is an invertible $\bbF_q$-linear transformation of
$\bbF_{q^3}$ and therefore maps projective lines to projective lines.
Starting from any projective line, repeated multiplication by $\theta$
generates its cyclic translates. 
By the classical construction of \citet{singer1938theorem}, this action is
transitive on the projective lines.
Hence, if
\[
D\subset\mathbb Z_N
\]
denotes the labels of the $q+1$ projective points lying on one fixed
projective line, then all projective lines are represented by
\[
D+j=\{d+j\pmod N:d\in D\},
\qquad j=0,1,\ldots,N-1.
\]
These $N$ translates are precisely the $N$ blocks of the
projective-plane BIBD.

The set $D$ is commonly called a \emph{Singer difference set}.
Its defining combinatorial property can also be seen directly from the
BIBD. Since every pair of distinct projective points lies on exactly
one projective line, for every nonzero $h\in\mathbb Z_N$ there is
exactly one ordered pair $(d_1,d_2)\in D\times D$ such that
\[
d_1-d_2=h\pmod N.
\]
Thus $D$ is a cyclic $(N,q+1,1)$ difference set.

The cyclic representation makes the sampling design completely
explicit. Once the $q+1$ labels in one set $D$ are known, all $N$
possible samples are obtained by cyclic translation. Sampling from
the BIBD can therefore be implemented simply by selecting
$j$ uniformly from $\mathbb Z_N$ and taking
\[
D+j
\]
as the sample, without explicitly constructing the entire incidence
matrix.

We have now completed the design component of the counterexample.
The next task is quite different: we must construct, along this
sequence of BIBDs, a sequence of finite populations for which the
BIBD sample total has a prescribed nonnormal limiting distribution
while the sample total under the matching SRSWOR design is
asymptotically normal.

\section{Construction of the finite population and limiting distributions}
\label{sec:construction}

We now construct a sequence of finite populations on which the two
sampling plans developed above have different limiting distributions.
Throughout this section,
\[
N=q^2+q+1,
\qquad
n=q+1,
\]
and the BIBD is the projective-plane design constructed in
Section~\ref{subsec:BIBD}. For concreteness, one may take
$q=2^m$ and let $m\to\infty$.

The idea of the construction is to work backward. Rather than first
specifying a finite population and then determining the distribution
of its BIBD sample total, we first prescribe all possible BIBD sample
totals. We then recover a finite population having exactly those
sample totals. The remaining difficulty is to arrange the prescribed
totals so that the resulting finite population also satisfies a
finite-population central limit theorem under SRSWOR.

\subsection{Prescribing the BIBD sample totals}

Let $\bfy=(y_1,\ldots,y_N)^\top$
denote the finite-population response vector, and let $A=[a_{ji}]$ be
the $N\times N$ incidence matrix of the BIBD, where
\[
a_{ji}
=
\ind(\text{unit $i$ belongs to block $j$}).
\]
Thus, rows of $A$ correspond to blocks and columns correspond to
population units. If block $j$ is selected, its sample total is
$\sum_{i=1}^N a_{ji}y_i.$
Consequently, $A\bfy$
is the vector containing all $N$ possible sample totals under the
BIBD sampling plan.

For the projective-plane BIBD, every block contains $q+1$ units and
every pair of distinct blocks has exactly one unit in common. Hence
\[
AA^\top=qI+J.
\]
By duality of the projective plane, the same relation holds for
columns:
\[
A^\top A=qI+J,
\]
where $I$ is the $N\times N$ identity matrix and $J$ is the
$N\times N$ matrix of ones.

We will work with centered vectors. Suppose
\[
\mathbf 1^\top\bfw=0
\]
and define
\[
\bfy=\frac1q A^\top\bfw.
\]
Since $J\bfw=0$,
\[
A\bfy
=
\frac1q AA^\top\bfw
=
\frac1q(qI+J)\bfw
=
\bfw.
\]
Thus, any centered vector $\bfw$ may be prescribed as the complete
vector of possible BIBD sample totals.

We choose $\bfw$ to approximate a centered standard exponential
distribution. Specifically, for $i=1,\ldots,N$, let
\[
w_i
=
-\log\left(1-\frac{i-0.5}{N}\right)
+
\frac1N\sum_{j=1}^N
\log\left(1-\frac{j-0.5}{N}\right).
\]
By construction,
\[
\sum_{i=1}^N w_i=0.
\]
The empirical distribution of $\{w_1,\ldots,w_N\}$ converges to the
distribution of
\[
E-1,
\qquad E\sim\operatorname{Exp}(1).
\]
Moreover,
\[
\frac1N\|\bfw\|^2\longrightarrow 1
\]
and
\[
\max_{1\leq i\leq N}|w_i|=O(\log N).
\]

With
\[
\bfy=\frac1q A^\top\bfw,
\]
the possible BIBD sample totals are exactly the coordinates of
$\bfw$. Since the BIBD sampling plan selects each of its $N$ blocks
with probability $1/N$, its sample total has the empirical
distribution of $\bfw$. It therefore converges in distribution to
the centered exponential law $E-1$.

The population itself is centered. Indeed,
\[
\mathbf 1^\top\bfy
=
\frac1q\mathbf 1^\top A^\top\bfw
=
\frac{q+1}{q}\mathbf 1^\top\bfw
=
0.
\]
In addition,
\[
\|\bfy\|^2
=
\frac1{q^2}\bfw^\top AA^\top\bfw
=
\frac1q\|\bfw\|^2.
\]
Since $N/q^2\to1$ and $N^{-1}\|\bfw\|^2\to1$, it follows that
\[
\|\bfy\|^2\sim q.
\]
Thus, the finite-population variance of the $y_i$'s is of order
$q/N$, or equivalently of order $q^{-1}$.

\subsection{The SRSWOR requirement}

It remains to show that the same finite population can be chosen so
that its sample total under SRSWOR is asymptotically normal.

For SRSWOR of size $n=q+1$, a standard sufficient condition for the
finite-population CLT is
\[
\frac{\max_{i\in[N]}(y_i-\bar Y_N)^2}
     {n^{-1}\sum_{i=1}^N(y_i-\bar Y_N)^2}
\longrightarrow0;
\]
see, for example, \citet{hajek1960} and \citet{liDing2017}.
Under our construction, $\bar Y_N=0$,
\[
\sum_{i=1}^N y_i^2\asymp q,
\qquad
n\asymp q,
\]
so the denominator above is of constant order. It is therefore
sufficient to establish
\[
\max_{i\in[N]}|y_i|=o(1).
\]

There is no obvious reason for this condition to hold for an arbitrary
assignment of the coordinates of $\bfw$ to the blocks. The key
observation is that we are free to permute these prescribed block
totals. Such a permutation leaves their empirical distribution, and
hence the distribution of the BIBD sample total, completely unchanged.
At the same time, it changes the resulting finite population
\[
\bfy=\frac1qA^\top\bfw.
\]
We exploit this freedom to find an assignment for which no population
unit is dominant.

Let $\tau$ be a permutation of $[N]$ and write
\[
\tau(\bfw)
=
(w_{\tau(1)},\ldots,w_{\tau(N)})^\top.
\]
Define
\[
\bfy_\tau
=
\frac1qA^\top\tau(\bfw).
\]
Then
\[
A\bfy_\tau=\tau(\bfw),
\]
so the BIBD sample-total distribution remains the same for every
$\tau$. In contrast, the coordinates of $\bfy_\tau$ depend strongly
on $\tau$.

For a fixed population unit $i$, the $i$th column of $A$ contains
exactly $q+1$ ones. Hence
\[
y_{i\tau} = \frac1q\sum_{j:\,a_{ji}=1}w_{\tau(j)}.
\]
If $\tau$ is chosen uniformly from the $N!$ possible permutations,
the $q+1$ values appearing in this sum form a simple random sample
without replacement of size $q+1$ from the finite population
$\{w_1,\ldots,w_N\}$. Thus,
\[
y_{i\tau}
=
\frac{q+1}{q}\,\overline W_i,
\]
where $\overline W_i$ has the distribution of the mean of an SRSWOR
sample of size $q+1$ from the centered finite population $\bfw$.

There is another useful way to view this observation. Consider all
possible permutations and all population units simultaneously, and write
\[
{\cal Y}
=
\{y_{i\tau}:i\in[N],\ \tau\text{ is a permutation}\},
\]
where elements are counted with their multiplicities. Apart from the
factor $(q+1)/q$, the uniform distribution on ${\cal Y}$ is exactly
the distribution of the sample mean from an SRSWOR sample of size
$q+1$ from $\bfw$. Indeed, as $\tau$ ranges uniformly over all
permutations, every subset of $q+1$ coordinates of $\bfw$ occurs
equally often in the positions corresponding to the $q+1$ blocks
containing a fixed unit $i$.

Since
\[
\frac1N\sum_{j=1}^Nw_j^2\longrightarrow1,
\qquad
\max_j|w_j|=O(\log N),
\]
the SRSWOR sample mean from $\bfw$, after multiplication by
$\sqrt q$, is asymptotically normal.
Thus, a uniformly selected value from ${\cal Y}$ is typically of
order $q^{-1/2}$.
This observation strongly suggests
that a permutation can be found for which all $N$ coordinates
$y_{i\tau}$ are small simultaneously. Asymptotic normality by itself,
however, does not provide the uniform control required for the maximum.
For that purpose we use an exponential inequality.

\subsection{Existence of a suitable permutation}

Recall that $\max_{j\in[N]}|w_j|=O(\log N)$.
For each fixed $i$, $y_{i\tau}$ is, apart from the factor
$(q+1)/q$, the mean of $q+1$ values sampled without replacement from
the centered finite population $\bfw$. The Hoeffding inequality for
sampling without replacement \citep{hoeffding1963}, in the
finite-population form of
\citet[Corollary~1.1, p.~40]{Serfling1974}, therefore implies that,
for some constant $c>0$ independent of $i$ and $q$,
\[
P_\tau\{|y_{i\tau}|>t\}
\leq
2\exp\left\{
-\frac{cqt^2}{(\log N)^2}
\right\}.
\]
Applying the union bound over all $N$ population units gives
\[
P_\tau\left\{
\max_{i\in[N]}|y_{i\tau}|>t
\right\}
\leq
2N\exp\left\{
-\frac{cqt^2}{(\log N)^2}
\right\}.
\]

We now make the choice explicit. Let
\[
t=
\sqrt{\frac{3}{c}}\,
\frac{(\log N)^{3/2}}{\sqrt q}.
\]
Then
\[
\frac{cqt^2}{(\log N)^2}
=
3\log N,
\]
and hence
\[
P_\tau\left\{
\max_{i\in[N]}|y_{i\tau}|>t
\right\}
\leq
2N e^{-3\log N}
=
\frac{2}{N^2}
\longrightarrow0.
\]
Consequently, with probability tending to one over the uniformly
chosen permutation,
\[
\max_{i\in[N]}|y_{i\tau}|
\leq
\sqrt{\frac{3}{c}}\,
\frac{(\log N)^{3/2}}{\sqrt q}.
\]
Since
\[
N=q^2+q+1,
\]
we have $\log N=O(\log q)$ and therefore
\[
\frac{(\log N)^{3/2}}{\sqrt q}\longrightarrow0.
\]
It follows that, for every sufficiently large $q$, there exists at
least one permutation $\tau_q$ such that
\[
\max_{i\in[N]}|y_{i\tau_q}|=o(1).
\]

Fix one such permutation for each $q$ and define
\[
\bfy_q=\frac1qA^\top\tau_q(\bfw).
\]
The resulting sequence of finite populations therefore satisfies the
sufficient condition for the SRSWOR central limit theorem, while the
empirical distribution of the BIBD sample totals remains unchanged.

\subsection{Different limiting distributions}

We are now ready to summarize the construction and its main
consequence.

\begin{theorem}
\label{thm:counterexample}
There exists a sequence of finite populations ${\cal P}_N$ with
\[
N=q^2+q+1,\qquad n=q+1,\qquad q\to\infty,
\]
and, on each population, a projective-plane BIBD sampling plan and
SRSWOR of size $n$, such that the following properties hold.

\begin{enumerate}
\item
The two sampling plans have exactly the same first- and second-order
inclusion probabilities:
\[
\pi_i=\frac{n}{N},
\qquad
\pi_{ij}=\frac{n(n-1)}{N(N-1)},\qquad i\ne j.
\]

\item
Under the BIBD sampling plan, the sample total $T_{\rm BIBD}$
converges in distribution to a centered standard exponential random
variable:
\[
T_{\rm BIBD}\xrightarrow{d}E-1,
\qquad E\sim\operatorname{Exp}(1).
\]

\item
Under SRSWOR, the sample total $T_{\rm SRS}$ is asymptotically
standard normal:
\[
T_{\rm SRS}\xrightarrow{d}N(0,1).
\]
\end{enumerate}

In particular, the two sample totals have exactly the same expectation
and variance for every $q$, while their limiting distributions are
different.
\end{theorem}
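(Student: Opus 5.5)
The proof assembles the pieces built in Sections~\ref{sec:connection}--\ref{sec:construction}, with one population constructed for each $q=2^m$. For each such $q$ I take the projective-plane BIBD of Section~\ref{subsec:BIBD} with $N=q^2+q+1$ and $n=q+1$. I take $\bfw$ as defined above and fix a permutation $\tau_q$ given by the preceding subsection. The population is $\bfy_q=q^{-1}A^\top\tau_q(\bfw)$. The three items are then checked in turn, followed by the remark about moments.

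\emph{Item 1.} This follows from Theorem~\ref{thm:tdesign} with $t=2$, since $PG(2,q)$ gives a $2$-$(N,q+1,1)$ design. Alternatively, it follows from the identities $NR=Bn$ and $R(n-1)=\lambda(N-1)$ with $B=N$, $R=n=q+1$, $\lambda=1$. It involves only the design and not the population, so it holds for every $\tau_q$.

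\emph{Item 2.} Since $A\bfy_q=\tau_q(\bfw)$ and each block is drawn with probability $1/N$, $T_{\rm BIBD}$ is uniform on $\{w_1,\ldots,w_N\}$. Write $F^{-1}(u)=-\log(1-u)$ and $\bar c_N=N^{-1}\sum_j\log(1-(j-0.5)/N)$, so that $w_i=F^{-1}((i-0.5)/N)+\bar c_N$. The shift satisfies $\bar c_N\to\int_0^1\log(1-u)\,du=-1$, because it is a midpoint Riemann sum of an integrable, monotone, log-singular function. For any bounded continuous $g$, $N^{-1}\sum_i g(w_i)$ is then a midpoint Riemann sum of $g(F^{-1}(u)+\bar c_N)$. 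Continuity of $g$, together with $\bar c_N\to-1$, gives convergence to $E\,g(E-1)$. This proves the weak convergence. The same Riemann-sum argument applied to $(\log(1-u))^2$ gives $N^{-1}\|\bfw\|^2\to 1$, which is used below.

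\emph{Item 3.} Here $\bar Y_N=0$. Under SRSWOR the total has mean $0$ and variance
\[
n\Bigl(1-\frac nN\Bigr)\frac{\|\bfy_q\|^2}{N-1}
=\frac{(q+1)}{q}\Bigl(1-\frac{q+1}{N}\Bigr)\frac{\|\bfw\|^2}{N-1}\longrightarrow 1,
\]
using $\|\bfy_q\|^2=q^{-1}\|\bfw\|^2$. The choice of $\tau_q$ gives $\max_i|y_{iq}|=O((\log q)^{3/2}q^{-1/2})=o(1)$. The denominator $n^{-1}\sum_i y_{iq}^2$ tends to $1$, so the H\'ajek-type condition quoted from \citet{hajek1960} holds. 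Hence $T_{\rm SRS}/\sqrt{\var(T_{\rm SRS})}\xrightarrow{d}N(0,1)$, and Slutsky's theorem removes the normalization.

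\emph{Final remark.} Both sampling plans have constant $\pi_i=n/N$, and the sample totals have identical $\pi_i,\pi_{ij}$. The expectation and variance of a sample total are linear in the $\pi_i$ and $\pi_{ij}$, so the two totals have exactly equal means and variances at every $q$. Both are then $0$ and $N^{-1}\|\bfw\|^2$, which is consistent with $\operatorname{Var}(E-1)=1$. Since $E-1$ is not normal, the limits differ.

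The step I expect to need the most care is the existence of $\tau_q$ behind Item~3. There the Serfling--Hoeffding bound must be applied with range $O(\log N)$, with a constant uniform in $i$ and $q$. Since that is already established in the preceding subsection, the remaining delicate point is the Riemann-sum convergence near the singularity at $u=1$ in Item~2. I would handle it by monotone comparison with the integral on $[0,1-1/(2N)]$, bounding the last cell by $O(\log N/N)$ for the first moment and $O((\log N)^2/N)$ for the second.
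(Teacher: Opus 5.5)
Your proposal is correct and follows the paper's proof essentially step for step: item 1 from the design identities (Theorem~\ref{thm:tdesign}), item 2 from $A\bfy_q=\tau_q(\bfw)$, and item 3 from $\var(T_{\rm SRS})\to1$ together with $\max_i|y_{iq}|=o(1)$ and Slutsky's theorem. Your midpoint Riemann-sum argument for the empirical limit of $\bfw$ and for $N^{-1}\|\bfw\|^2\to1$ fills in steps the paper only asserts, and so does your exact check that both variances equal $N^{-1}\|\bfw\|^2$.
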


\begin{proof}
For each $q$, let $\bfw$ be the centered exponential-quantile vector
defined above and choose a permutation $\tau_q$ satisfying
\[
\max_{i\in[N]}|y_{i\tau_q}|=o(1).
\]
Define the finite population by
\[
\bfy_q=\frac1qA^\top\tau_q(\bfw).
\]

By construction,
\[
A\bfy_q=\tau_q(\bfw).
\]
Since the BIBD sampling plan selects each of its $N$ blocks with
probability $1/N$, the distribution of its sample total is the
empirical distribution of the coordinates of $\tau_q(\bfw)$, which
is identical to that of $\bfw$. The empirical distribution of
$\bfw$ converges to the distribution of $E-1$, where
$E\sim\operatorname{Exp}(1)$. Hence
\[
T_{\rm BIBD}\xrightarrow{d}E-1.
\]

The population is centered, and
\[
\|\bfy_q\|^2
=
\frac1q\|\bfw\|^2.
\]
Since
\[
\frac1N\|\bfw\|^2\longrightarrow1,
\]
we have
\[
\frac1N\|\bfy_q\|^2
=
\frac{1}{qN}\|\bfw\|^2
\sim\frac1q.
\]
The variance of the SRSWOR sample total is therefore
\[
\begin{split}
\var(T_{\rm SRS})
&=
n^2\left(\frac1n-\frac1N\right)V_N^2\\
&=
n\left(1-\frac nN\right)V_N^2,
\end{split}
\]
where
\[
V_N^2=\frac{1}{N-1}\sum_{i=1}^Ny_i^2.
\]
Using $n=q+1$, $N=q^2+q+1$, and
$N^{-1}\|\bfw\|^2\to1$, it follows that
\[
\var(T_{\rm SRS})\longrightarrow1.
\]

Moreover, the permutation $\tau_q$ was chosen so that
\[
\max_{i\in[N]}|y_{i\tau_q}|=o(1).
\]
Together with the preceding variance calculation, this satisfies the
finite-population Lindeberg condition for SRSWOR. Hence
\[
\frac{T_{\rm SRS}}
{\{\var(T_{\rm SRS})\}^{1/2}}
\xrightarrow{d}N(0,1).
\]
Since $\var(T_{\rm SRS})\to1$, Slutsky's theorem gives
\[
T_{\rm SRS}\xrightarrow{d}N(0,1).
\]

Finally, the two sampling plans have identical first- and second-order
inclusion probabilities by Theorem~\ref{thm:tdesign}. Therefore,
for the same finite population, their sample totals have exactly the
same expectation and variance for every $q$. This completes the
proof.
\end{proof}

Theorem~\ref{thm:counterexample} shows that the failure is stronger
than a difference between two standardized distributions. At every
stage of the sequence, the two sampling plans have identical
first- and second-order inclusion probabilities and hence give sample
totals with exactly the same mean and variance. Their common variance
even converges to one. Nevertheless, one sample total converges to
$N(0,1)$ and the other to a centered exponential distribution.
Thus, the difference in their limiting behavior can only arise from
features of the dependence structure that are not captured by
first- and second-order inclusion probabilities.

\section{A numerical demonstration}
\label{sec:numerical}

We give a brief numerical demonstration of the construction. We take
\[
q=2^{10}=1024,
\qquad
N=q^2+q+1=1,049,601,
\qquad
n=q+1=1025.
\]
Thus, the finite population contains more than one million units,
while the sampling fraction is approximately $10^{-3}$.

To implement the projective-plane BIBD, we use its cyclic
representation from Section~\ref{subsec:cyclic}. A Singer difference
set
\[
D\subset\mathbb Z_N,
\qquad |D|=q+1,
\]
was generated computationally using the Python package
\texttt{galois} \citep{Hostetter_Galois_2020}. Once $D$ is available,
the $N$ blocks of the BIBD are simply
\[
D+j,\qquad j=0,1,\ldots,N-1,
\]
with addition modulo $N$. This is the computational form of the
classical cyclic construction of \citet{singer1938theorem} discussed in
Section~\ref{subsec:cyclic}.
We form the centered exponential-quantile vector $\bfw$ as in
Section~\ref{sec:construction}. Rather than searching for a specially
chosen permutation satisfying the bound used in the proof, we simply
generate one permutation $\tau$ uniformly at random and construct
\[
\bfy=\frac1qA^\top\tau(\bfw).
\]
The theory in Section~\ref{sec:construction} shows that a randomly
chosen permutation has the required property with probability tending
to one. Thus, no optimization over permutations is needed for this
demonstration.

\begin{figure}[t]
\centering
\begin{subfigure}[t]{0.495\textwidth}
    \centering
    \includegraphics[width=\textwidth,height=0.35\textheight,keepaspectratio]{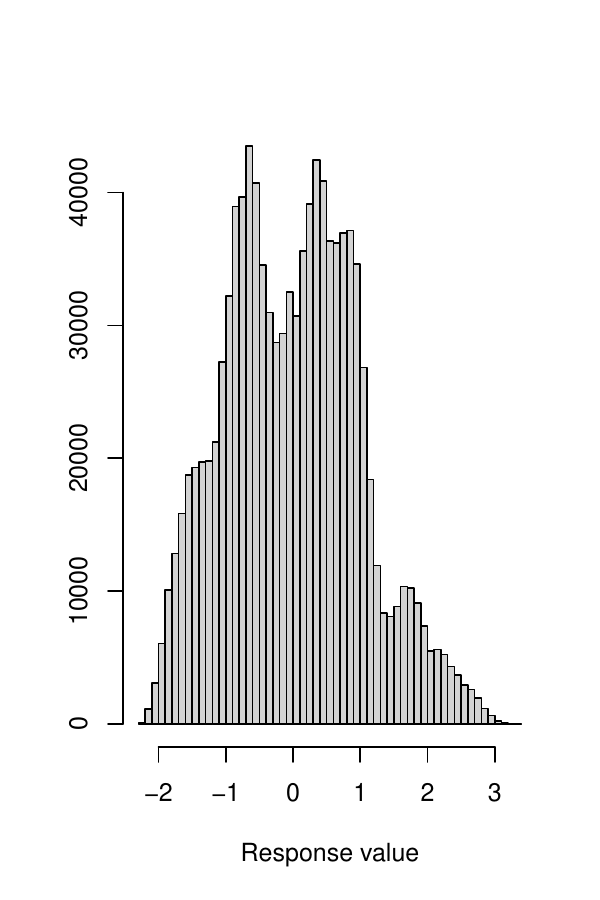}
    \caption{Finite population}
\end{subfigure}
\hfill
\begin{subfigure}[t]{0.495\textwidth}
    \centering
    \includegraphics[width=\textwidth,height=0.35\textheight,
    keepaspectratio]{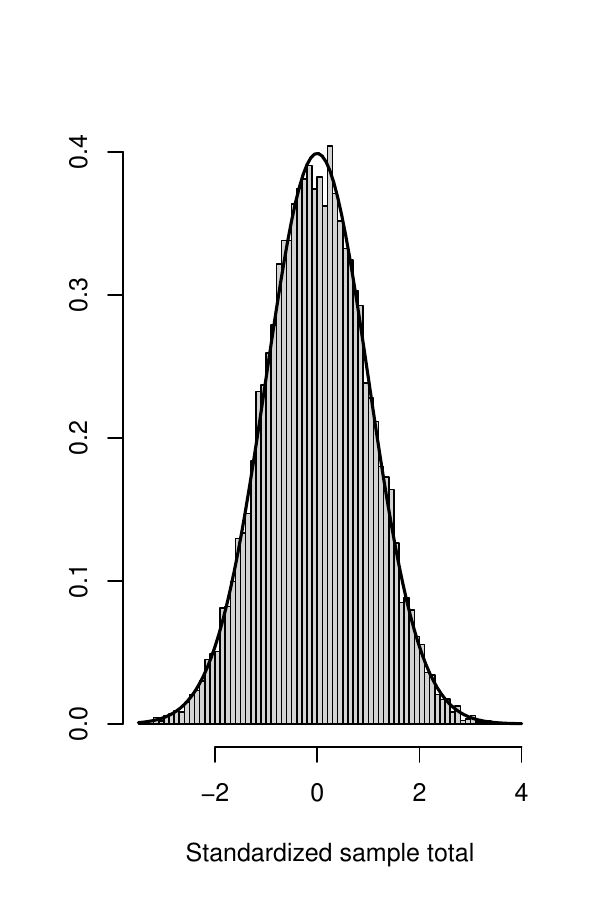}
    \caption{SRSWOR sample totals}
\end{subfigure}
\caption{Histograms of the finite population values and the sample totals
of $20{,}000$ SRSWOR samples.}
\label{fig:demo}
\end{figure}

There is no need to simulate the BIBD sampling distribution. By
construction,
\[
A\bfy=\tau(\bfw),
\]
so its $N$ possible sample totals are known exactly and their empirical
distribution is the centered exponential distribution prescribed by
$\bfw$. The more interesting question is whether the same constructed
finite population behaves normally under SRSWOR. We therefore draw
$20{,}000$ independent SRSWOR samples of size $n=1025$ from $\bfy$.
Figure~\ref{fig:demo} shows, in the left panel, the empirical
distribution of the finite-population values $y_i$ and, in the right
panel, the empirical distribution of the resulting SRSWOR sample
totals. Despite the nonstandard way in which the finite population is
constructed, its SRSWOR sample total is already very close to normally
distributed.

The numerical example is not needed for the theoretical result. Its
purpose is simply to illustrate that the construction is practical:
a single random permutation produces a finite population for which the
SRSWOR central limit behavior is clearly visible, while the BIBD
sample-total distribution remains, by construction, centered
exponential.

\section{Discussion}
\label{sec:discussion}

The counterexample in this paper reveals a limitation of low-order
descriptions of a sampling design that appears not to have been widely
recognized. First- and second-order inclusion probabilities occupy a
central place in survey sampling: they determine the expectation and
variance of the Horv\'ath--Thompson estimator and enter directly into
variance estimation and many asymptotic arguments. Nevertheless, our
result shows that even exact agreement of these quantities with those
of SRSWOR does not preserve asymptotic normality. Two sampling designs
can give estimators with exactly the same mean and variance at every
stage of an asymptotic sequence and yet have fundamentally different
limiting distributions.

The example also helps explain why central limit theorems for complex
sampling designs typically require information beyond low-order
inclusion probabilities. Successful results often exploit additional
structure of the sampling design, impose conditions on higher-order
dependence, or assume an appropriate HT central limit theorem as part
of the asymptotic framework. Our result shows that such additional
information cannot, in general, be replaced by conditions involving
only first- and second-order inclusion probabilities.

More broadly, Theorem~\ref{thm:tdesign} shows that agreement with
SRSWOR through any fixed order $t$ arises naturally from block designs
of strength $t$. This observation suggests that the phenomenon
demonstrated here for $t=2$ may extend much further. We conjecture
that, for every fixed $t$, agreement of all inclusion probabilities
through order $t$ is insufficient by itself to determine the limiting
distribution of a sampling estimator. Establishing or disproving this
conjecture would help clarify what information about a sampling design
is genuinely needed for general finite-population central limit
theorems.

\section*{Acknowledgments}

The author gratefully acknowledges the extensive assistance of ChatGPT
(OpenAI) during the development of this work. ChatGPT was used for
mathematical discussion and exploration, literature searches, checking
and organizing arguments, and improving the presentation of the
manuscript. In particular, interactions with ChatGPT were helpful in
developing and clarifying the BIBD construction and the permutation
argument used in the proof. The author independently verified the
mathematical arguments and references presented in the paper and takes
full responsibility for its content.

\medskip
\noindent
\textbf{Funding.}
The author was supported by the Natural Sciences and Engineering
Research Council of Canada (RGPIN-2025-03989).

\bibliographystyle{plainnat}
\bibliography{chen2023CLT}

\end{document}